\begin{document}
\newcommand{\Z}{\mathbb{Z}}
\newcommand{\aut}{\mathrm{Aut}\, R}
\newcommand{\obar}{\overline}
\newcommand{\abs}[1]{|#1|}
\newcommand{\F}{\mathbb{F}}
\newcommand{\Q}{\mathbb{Q}}

\theoremstyle{plain}
\newtheorem{thm}{Theorem}[section]
\newtheorem{lemma}[thm]{Lemma}
\newtheorem{cor}[thm]{Corollary}
\theoremstyle{definition}
\newtheorem{defn}{Definition}
\newtheorem{conj}{Conjecture}
\theoremstyle{remark}
\newtheorem{rmk}{Remark}
\newtheorem{expl}{Example}
\newtheorem{que}{Question}
\title[Automorphisms of rings]{A graphical representation of
  rings via automorphism groups}
\author{N. Mohan Kumar}
\address{ Department of Mathematics,
Washington University in St. Louis, St. Louis, Missouri, 63130, U.S.A.}
\email{kumar@wustl.edu}
\urladdr{http://www.math.wustl.edu/\~{}\!\!\!
kumar}
\thanks{The first author was partially supported by a grant
from NSA}
\author{Pramod K. Sharma}
\address{School Of Mathematics,
Vigyan Bhawan, Khandwa Road, INDORE--452 001, INDIA.}
\email{pksharma1944@yahoo.com}
\thanks{All correspondences will be handled by the second author}
\keywords{Automorphisms of rings, Finite rings, Type of a ring}
\subjclass[2000]{13M05}
\date{}

\begin{abstract} Let $R$ be a commutative ring
                  with identity. We define a graph $\Gamma_{\aut}(R)$  on $
                   R$, with vertices elements of $R$, such that
                   any two distinct vertices $x, y$ are
                   adjacent if and only if there exists $\sigma \in
                   \aut$ such that $\sigma(x)=y$. The idea is to
                   apply graph theory to study
                   orbit spaces of rings under automorphisms. In this
                   article, we define the notion of a ring of type $n$
                   for $n\geq 0$ and characterize all rings of type
                   zero. We also characterize local rings $(R,M) $ in
                   which either  the subset of units ($\neq 1 $) is
                   connected or the subset $M- \{0\}$ is connected in
                   $\Gamma_{\aut}(R)$.
\end{abstract}

\maketitle

                  \section{Introduction} Throughout this article,
                  all  rings are commutative with identity. We
                  denote by $ \Z_n $, the ring of integers modulo $n$,
                  and by $ U(R) $, the group of units of a ring
                  $R$. We will also use the notation $\F_q$ to denote
                  a field of $q$ elements, where of course, $q$ is the
                  power of a prime.

                   In the last decade, study of rings using
                   properties of graphs has attracted considerable
                   attention. In \cite{B}, I.~Beck  defined a simple graph
                   on a  commutative ring $ R $  with
                   vertices elements of $ R $ where two different
                   vertices $x$ and $y$ in $ R $ are adjacent, by
                   which we mean as usual that they are connected by
                   an edge,  if and
                   only if $ xy=0 $. In \cite{SB}, the authors defined
                   another graph on a ring $ R $ with vertices
                   elements of $ R $ such that two different
                   vertices $x$ and $y$ are adjacent if and only if $ Rx
                   + Ry= R $. In this article, we define another
                   graph $ \Gamma_{\aut}(R) $ with vertices elements of $ R
                   $  where two
                   different vertices $ x, y \in \Gamma_{\aut}(R) $ are
                   adjacent if and only if $ \sigma (x)= y $ for
                   some $\sigma \in \aut $. It is proved that if
                   $\Gamma_{\aut}(R) $ is totally disconnected, which is
                   equivalent to $ \deg x$  being zero for all $x \in
                   R $, then $ R $ is
                   either $ \Z_n $ or  $ \Z_2[X]/(X^2) $.
                  As usual, the degree of a vertex is the number of edges
                   emanating from it. Further,
                   we define
                   the notion of rings of type $n$ and study the
                   structure of rings of type at most one. We also
                   characterize finite local rings $(R, M)$ with
                   either  $ U(R)- \{1\}$  connected or $ M-\{0\}$  connected
                    as subsets of $ \Gamma_{\aut}(R) $.

                    In general, if for a ring $ R$,  $H $ is a
                    subgroup of $ \aut $, then we can define a
                    graph structure on $ R $ using $ H $ instead of
                    $ \aut $. We shall denote this graph by
                    $\Gamma_H(R) $. We expect that this approach may be
                    useful in the study of orbit space of $ R $
                    under $ \aut$.

                    \section{Preliminaries}
We recall some basic notions from graph theory.

                    A simple graph $\mathfrak{G}$ is a
                    non-empty set $V$ together with a set $E$ of
                    un-ordered pairs of distinct elements of $V$.
                                        The elements of $V$ are called
                    vertices and an
                    element $e=\{u,v\}\in E$ where $u,v\in V$ is
                    called an edge of $\mathfrak{G}$
                    joining the vertices $u\,\text{and}\, v$. If $
                    \{u,v\}\in
                    E$, then $u\,\text{ and }\, v$ are called adjacent
                    vertices. In this case $u$ is
                    adjacent to $v$ and $v$ is  adjacent to $u$.
                    We shall normally denote the graph just
                    by $\mathfrak{G}$  and call $|V|$, the cardinality of
                    $V$, the order of $\mathfrak{G}$. We shall
                    sometimes write $|\mathfrak{G}|$ for
                    the order of $\mathfrak{G}$. For any vertex
                    $v\in\mathfrak{G}$,
                    degree of $v$, denoted by $\deg v $, is the
                    number of edges of $\mathfrak{G}$ incident with $v$. An automorphism $\alpha$
                    of a graph $\mathfrak{G}$  is a permutation of the set of vertices $V $ of
                    $\mathfrak{G}$ which preserves adjacency.

                    A subgraph of $\mathfrak{G}$ is a graph having all its
                    vertices and edges in $\mathfrak{G}$.  A graph
                    $\mathfrak{G}$ is called complete
                    if any two vertices in $\mathfrak{G}$ are adjacent. A
                    clique of a graph is a maximal complete
                    subgraph.

                    A graph $\mathfrak{G}$ is called connected if for all
                    distinct vertices $x,y\in \mathfrak{G}$ there is a path
                    from $x$  to $ y $. A graph
                    $\mathfrak{G}$ is called
                    totally disconnected if there are no edges in
                    $\mathfrak{G}$. That is, the edge set of
                    $\mathfrak{G}$ is empty. We say that a graph  $\mathfrak{G}$ can be embedded
                    in a surface S if it can be drawn on S so that no two edges inersect. A graph
                    is planar if it can be embedded in a plane.

                    For a ring $R$, $\aut$ operates in a natural
                    way on $R$. If $S\subset \Gamma_{\aut}(R) $ is
                    connected, then for any $a,b\in S$, there is
                    $\sigma \in \aut$ such that $\sigma(a)=b$. For
                    any $x\in R$, we denote by $O(x)$ the orbit of
                    $x$ under the action of $\aut$. In fact $O(x)$ is
                    the clique of $\Gamma_{\aut}(R)$ containing $x$.
                    Moreover, any clique of $\Gamma_{\aut}(R)$ is of the
                    form $O(x)$ for some $x\in R$.

                    Let $K/k$ be a field extension. Then for any
                    subgroup $H$ of $\mathrm{Aut}\,(K)$, $k\subset
                    \Gamma_H(K)$
                    is totally disconnected if and only if $H\subset
                    \mathrm{Aut}_k(K)$.

We record some elementary
                    results.
            \begin{lemma}\label{l.1}
               Let $R$ be an integral domain
                    and $ G=\aut$. For any $\lambda \in R-R^G,$
                    $\lambda$ is integral over $ R^G $ if and only if
                    the clique of $\Gamma_{\aut}(R)$ containing $\lambda$
                    is finite.
            \end{lemma}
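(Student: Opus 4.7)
The plan is to identify the clique containing $\lambda$ with the orbit $O(\lambda)$ under $G$, as noted just before the lemma, and then prove the equivalence by standard Galois-theoretic manipulation of monic polynomials.

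For the forward direction, I would assume $\lambda$ satisfies a monic relation $f(X) = X^d + a_{d-1}X^{d-1} + \cdots + a_0$ with coefficients $a_i \in R^G$. Applying any $\sigma \in G$ to the equation $f(\lambda) = 0$ fixes every coefficient and yields $f(\sigma(\lambda)) = 0$. Hence every element of $O(\lambda)$ is a root of $f$; since $R$ is a domain, $f$ has at most $d$ roots, so $|O(\lambda)| \leq d < \infty$.

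For the converse, I would enumerate $O(\lambda) = \{\lambda_1, \ldots, \lambda_n\}$ with $\lambda = \lambda_1$ and form
\[
f(X) = \prod_{i=1}^{n}(X - \lambda_i) \in R[X].
\]
This polynomial is monic and has $\lambda$ as a root. Each $\sigma \in G$ permutes the orbit $O(\lambda)$, so it permutes the factors $X - \lambda_i$, hence fixes $f$ coefficient-wise. Thus the coefficients of $f$, being elementary symmetric functions of the $\lambda_i$, lie in $R^G$, witnessing that $\lambda$ is integral over $R^G$.

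I do not anticipate a real obstacle; the argument is entirely formal once the identification clique $= O(\lambda)$ is in hand. The one minor point worth flagging is why we need $R$ to be an integral domain: this is used only in the forward direction to bound the number of roots of $f$ by its degree. The hypothesis $\lambda \notin R^G$ plays no role in the argument itself; it merely excludes the degenerate case where $O(\lambda) = \{\lambda\}$ is a single vertex (not literally a non-trivial clique) and $\lambda$ is trivially integral over $R^G$.
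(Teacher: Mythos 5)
Your proof is correct and is exactly the standard argument the paper alludes to when it says ``The proof is standard'': identify the clique with the orbit $O(\lambda)$, bound the orbit by the roots of an integral dependence relation in one direction, and take the product $\prod_i (X-\lambda_i)$ with $G$-invariant elementary symmetric coefficients in the other. Your remarks on where the domain hypothesis is used and on the irrelevance of $\lambda\notin R^G$ are also accurate.
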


                     The proof is standard.
             \begin{thm}\label{t.2}
               Let $R$ be a Noetherian integral domain such
                    that $\Gamma_{\aut}(R)$ has a finite number of
                    cliques. Then $R$ is a finite field.
             \end{thm}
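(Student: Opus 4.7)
The plan is to proceed in three stages. First, I would show that $R^G$ must be a finite field. Each element of $R^G$ is a singleton clique of $\Gamma_{\aut}(R)$, so the finiteness of the clique count forces $R^G$ to be a finite integral domain, i.e.\ a finite field $F=\F_q$; in particular $R$ has characteristic $p>0$. If $R$ is finite, then, as a finite integral domain, it is already a finite field and the conclusion holds; so assume $R$ is infinite. By pigeonhole some clique is infinite, and by Lemma~\ref{l.1} this clique is $O(\lambda)$ for some $\lambda\in R$ transcendental over $F$. My goal from here is to derive a contradiction by exhibiting infinitely many distinct cliques.

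Second, I would reduce to the case where a transcendental $\lambda$ can be chosen that is also a non-unit of $R$. Any nonzero algebraic element of $R$ over $F$ lies in the finite subfield $F[\alpha]\subset R$ and hence is a unit; so if every transcendental element of $R$ were a unit, every nonzero element of $R$ would be a unit and $R$ would be a field, a degenerate case handled separately (see below). Otherwise, we may replace $\lambda$ by a transcendental non-unit. Now invoke the Mori--Nagata theorem: since $R$ is Noetherian, the integral closure $R'$ of $R$ in $K=\mathrm{Frac}(R)$ is a Krull domain, and every $\sigma\in\aut$ extends uniquely to an automorphism of $R'$. Because $\lambda$ is a non-unit, it lies in only finitely many height-one primes $P_1,\dots,P_s$ of $R'$, with positive values $e_i=v_{P_i}(\lambda)\geq 1$.

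Third, I would derive the contradiction by examining the cliques $O(\lambda^p)$ as $p$ ranges over primes. If $O(\lambda^p)=O(\lambda^q)$ then some $\sigma\in\aut$ satisfies $\sigma(\lambda)^p=\lambda^q$; applying $v_{P_i}$ yields $p\cdot v_{P_i}(\sigma(\lambda))=q\,e_i$, so $p\mid q\,e_i$. Choosing a prime $p>\max_i e_i$ forces $\gcd(p,e_i)=1$ for each $i$, and hence $p\mid q$; as $q$ is also prime, $q=p$. Therefore the cliques $O(\lambda^p)$ for primes $p>\max_i e_i$ are pairwise distinct, contradicting the hypothesis of finitely many cliques and completing the main argument. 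The step I expect to be the main obstacle is the residual case mentioned above, namely when the reduction forces $R$ itself to be a field: there $R'=R$ has Krull dimension zero and no height-one primes, so the valuation argument must be repeated with a valuation of $R$ obtained by extending the $\lambda$-adic valuation of $F(\lambda)\subset R$ via Chevalley's extension theorem, and one must argue carefully about the ramification index in the enlarged value group in order to preserve the crucial divisibility conclusion $p\mid q$.
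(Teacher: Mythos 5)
Your argument is necessarily independent of the paper's, since the paper offers no proof of this theorem beyond a citation of \cite[Corollary 16]{S2}. Taken on its own terms, the part of your proof that treats the case where $R$ is not a field is correct and complete: $R^G$ is a union of singleton cliques, hence a finite integral domain, hence a finite field $\F_q$ of characteristic $p$; nonzero algebraic elements over $\F_q$ are units, so a nonzero non-unit $\lambda$ is transcendental and remains a non-unit in the integral closure $R'$ (a Krull domain by Mori--Nagata, stable under every $\sigma\in\aut$); and the identity $p\,v_{P_i}(\sigma(\lambda))=q\,e_i$ yields $p\mid qe_i$ precisely because $\sigma(\lambda)\in R'$ forces $v_{P_i}(\sigma(\lambda))$ to be a non-negative integer. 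Taking primes $p>e_1$ then produces infinitely many distinct cliques $O(\lambda^p)$, the desired contradiction.

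The genuine gap is the residual case you flag yourself, where $R$ is an infinite field $K$, and the repair you sketch cannot be made to work. Chevalley's theorem does produce a valuation ring $\mathcal{O}\subset K$ extending the $\lambda$-adic valuation of $\F_q(\lambda)$, but (i) $\mathcal{O}$ is not stable under $\mathrm{Aut}(K)$, so $\sigma(\lambda)$ need not lie in $\mathcal{O}$ and $v(\sigma(\lambda))$ may be negative, and (ii) the value group need not be discrete, so no divisibility statement survives; both the non-negativity and the integrality that drove the non-field case are lost, and $p\,v(\sigma(\lambda))=q\,v(\lambda)$ then says nothing. This is not a removable technicality: the field case is exactly the hard part of the statement and is the subject of \cite{KP}, cited elsewhere in this paper, where infinite perfect fields of positive characteristic whose automorphism groups act with only finitely many orbits are constructed; such a field is itself a Noetherian domain with finitely many cliques, so no local or valuation-theoretic argument of the kind you propose can dispose of it, and indeed one must scrutinize the exact hypotheses of \cite[Corollary 16]{S2} to see how the field case is meant to be excluded or handled. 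As it stands, your proof establishes the theorem only for Noetherian domains that are not fields, and the concluding step needs either the external input of \cite{KP} and \cite{S2} or a fundamentally different idea.
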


                     \begin{proof}
                The proof follows from \cite[Corollary 16]{S2}.
             \end{proof}

                    Next we define the notion of \textit{type} of a
                    ring $R$.

            \begin{defn}\label{d.3}
              A ring $R$ is called of type $n$ if for
                         all $x\in \Gamma_{\aut}(R)$, $\deg x \leq n$, and
                         there exists at least one $y\in
                         \Gamma_{\aut}(R)$ such that $\deg y=n$.
            \end{defn}

            \begin{rmk}\label{r.4}
               Assume that the  ring $R$ is a direct product
                          of rings
                          $A$ and $ B$. If $R$ is of type
                          $n$, then $A$ and $ B$ are of type $\leq n$.
            \end{rmk}

            \begin{expl}\label{e.5}
              For any prime $p$, $ R= \Z_{p}[X]/(X^2)$
                          is a ring of type $p-2$.

This can be seen as follows.
Let us denote by $x$ the image of $X$ in $R$. If $\psi$ is an
automorphism of $R$, then $\psi(x)=ax$ for some $0\neq a\in\Z_p$ and
conversely, given such an $a\in\Z_p$, we can define an automorphism of
$R$ by sending $x\mapsto ax$. Then, it is clear that $\aut$ has order
$p-1$. Therefore, for any $y\in R$, we have $\deg y=|O(y)|-1\leq
p-2$. On the other hand, $|O(x)|=p-1$ and thus
we see that $R$ is of type $p-2$.
\end{expl}

\begin{expl}\label{e.6}
  Let $ n> 1$ be an odd integer. Then
                          the ring $R = \Z_n[X]/(X^2)$ is of type
                          $\varphi(n)-1$, where $\varphi(n)$ denotes
                          the Euler phi function.

As before, let us denote by $x$ the image of $X$ in $R$. Any element
in $R$ can be uniquely written as $ax+b$ with $a,b\in\Z_n$. Let
$\psi\in \aut$. Notice that $\psi(a)=a$ for all $a\in\Z_n$. Then
$\psi(x)=ax+b$ for some $a,b\in\Z_n$. Since
$\psi$ is an automorphism, there exists an element $px+q\in R$ with
$p,q\in\Z_n$ such that
$$x=\psi(px+q)=p\psi(x)+q=pa x+pb+q.$$
Thus we get $pa=1$ and so $a$ must be a unit in $\Z_n$. Further, if
$\psi(x)=ax+b$, with $a\in U(\Z_n)$,  we must also have,
$$0=\psi(x^2)=(ax+b)^2=2abx+b^2$$ and hence $2ab=0$. Since $n$ is odd
and $a$ is a unit, we have $b=0$. So, any automorphism $\psi\in\aut$
must have, $\psi(x)=ax$ for some unit $a\in R$. It is easy to see that
any such map is indeed an automorphism. Thus we see that $\aut\cong
U(\Z_n)$, which has order $\varphi(n)$. Thus, as before, we get that
$|O(y)|\leq \varphi(n)$ for all $y\in R$ and since
$|O(x)|=\varphi(n)$, we see that $R$ is of type $\varphi(n)-1$.

\end{expl}

\begin{expl}\label{e.7}
 Let Let $p$ be a prime and $n\geq 1$ be
                        any integer. Then for the direct product
                        ring $ R = \Z_{p^n}\times \Z_{p^n}\times
                        \cdots \times\Z_{p^n}$  ($k$-times), where $k<
                        p^n$,  $\aut =
                        S_k$, the symmetric group on $k$ symbols. Thus
                        $R$ is of type $k!-1$.
\end{expl}

\begin{thm} \label{t.8}
  Let $(R,M)$ be a finite local ring
                        which is not a field, such that $\deg x \leq
                        1$ for all $x\in M$. Then $\aut$
                        is an Abelian group
                        of order $2^m$, $m\geq 0$ .
\end{thm}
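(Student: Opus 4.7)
The plan is to prove that every $\sigma \in \aut$ satisfies $\sigma^2 = \mathrm{id}$. A finite group of exponent dividing $2$ is automatically abelian (since $xy=(xy)^{-1}=y^{-1}x^{-1}=yx$) and elementary, so isomorphic to $(\Z/2)^m$, which is the desired conclusion. Because $R$ is local, every $\sigma \in \aut$ carries $M$ to itself, and the hypothesis that $\deg x \le 1$ on $M$ says that the orbits of $\aut$ on $M$ have size at most $2$. Hence $\sigma^2$ fixes $M$ pointwise for every $\sigma \in \aut$, and it suffices to show that the subgroup
$$ N = \{\tau \in \aut : \tau|_M = \mathrm{id}\} $$
is trivial.

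To handle $N$, fix $\tau \in N$ and set $\delta(a) = \tau(a) - a$. For $a \in R$ and $m \in M$,
$$ m\tau(a) = \tau(m)\tau(a) = \tau(ma) = ma, $$
so $\delta(a) \in \mathrm{Ann}_R(M)$. Since $M \ne 0$, $\mathrm{Ann}(M)$ is a proper ideal of the local ring $R$, hence contained in $M$; in particular $\tau$ already acts trivially on $R/M = \F_q$. Expanding $\tau(ab) = (a+\delta(a))(b+\delta(b))$ and observing that $\delta(a)\delta(b) \in \mathrm{Ann}(M)\cdot \mathrm{Ann}(M) \subseteq M \cdot \mathrm{Ann}(M) = 0$ shows that $\delta$ is an additive ring derivation $R \to \mathrm{Ann}(M)$. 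Because $\delta$ vanishes on $M$, it factors through $R/M$; and because $p = \mathrm{char}\,\F_q$ lies in $M$ and so kills $\mathrm{Ann}(M)$, the induced derivation $\overline\delta : \F_q \to \mathrm{Ann}(M)$ is an $\F_p$-derivation into an $\F_q$-module.

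The vanishing of $\overline\delta$ follows from the perfectness of $\F_q$: for any $x \in \F_q$, write $x = y^p$, so that $\overline\delta(x) = py^{p-1}\overline\delta(y) = 0$. Hence $\delta \equiv 0$, $\tau = \mathrm{id}$, and $N$ is trivial, finishing the proof. I expect the main obstacle to be the step $N = \{\mathrm{id}\}$, which rests on the two local-ring facts $\mathrm{Ann}(M) \subseteq M$ and $M \cdot \mathrm{Ann}(M) = 0$ that together reduce the problem to the non-existence of nonzero derivations on a finite field; once $N$ is trivial, the exponent-$2$ statement for $\aut$ (and therefore the theorem) is immediate.
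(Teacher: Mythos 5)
Your proof is correct, and its skeleton is the same as the paper's: the hypothesis $\deg x\le 1$ forces $\sigma^2$ to fix $M$ pointwise, and the whole weight then rests on showing that an automorphism which is the identity on $M$ is the identity on $R$. The paper disposes of that step by citing \cite[Theorem 2.5]{S1}; you instead prove it from scratch. Your argument for the triviality of $N$ is sound: for $\tau|_M=\mathrm{id}$ the map $\delta=\tau-\mathrm{id}$ lands in $\mathrm{Ann}(M)\subseteq M$ (properness of $\mathrm{Ann}(M)$ uses $M\ne 0$, i.e.\ that $R$ is not a field), the cross term $\delta(a)\delta(b)$ dies because $\mathrm{Ann}(M)\cdot M=0$, so $\delta$ becomes an additive derivation into the $R/M$-module $\mathrm{Ann}(M)$ that kills $M$, hence descends to a derivation of the finite (so perfect) field $R/M=\F_q$, which must vanish since every element is a $p$-th power and $p$ annihilates the target. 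What your route buys is a self-contained proof in place of an external reference; what the paper's route buys is brevity, and its cited lemma is stated for local rings in general rather than being re-derived in this special situation. The final step --- a finite group of exponent dividing $2$ is elementary abelian of order $2^m$ --- is the same in both.
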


\begin{proof}
  Let $x\in M$. By assumption, $\deg x\leq
                         1$. Hence for any $\sigma \in \aut$,
                         $x,\sigma(x), \sigma^2(x)$ are not all
                         distinct. Thus $\sigma^2(x)=x$ for all $x\in
                         M$. Thus
                         $\sigma^2= \mathrm{id}$  on $M$. Hence by
                         \cite[Theorem 2.5]{S1},
                         $\sigma^2=\mathrm{id}$.  Therefore
                         $\aut$ is Abelian of order $2^m$, $m\geq 0$.

\end{proof}

\begin{expl}\label{r.9}
  Let $(R,M)$ be a finite local ring
                        which is not a field such that $\deg x \leq
                        n$ for all $x\in M$. Then
                        for every $\sigma\in \aut$, order of
                        $\sigma$ is $\leq (n+1)!.$
\end{expl}

\begin{thm} \label{t.10}
  Let $K$ be a perfect field of characteristic $p>
                        0$. Then $K$ is of type $n$, if and only if $K =
                        \mathbb{F}_{p^{n+1}}$.
\end{thm}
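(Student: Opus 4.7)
The plan is to use the Frobenius endomorphism, which is an automorphism of $K$ since $K$ is perfect, to force every element of $K$ to satisfy a polynomial equation of bounded degree over $\F_p$, and then to read off the order of $K$ from the structure of $\aut$.

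For the easy direction, suppose $K=\F_{p^{n+1}}$. Then $\aut=\mathrm{Gal}(K/\F_p)$ is cyclic of order $n+1$, generated by the Frobenius $\phi\colon x\mapsto x^p$, so $|O(y)|\leq n+1$ and hence $\deg y\leq n$ for every $y\in K$. Taking $\alpha$ to be a multiplicative generator of $K^{\times}$, one checks that $\alpha$ generates $K$ over $\F_p$ (any proper subfield $\F_{p^d}\subsetneq K$ has fewer than $p^{n+1}-1$ nonzero elements), whence $|O(\alpha)|=n+1$ and $\deg\alpha=n$. Therefore $K$ is of type $n$.

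For the converse, assume $K$ is perfect of characteristic $p$ and of type $n$. Perfectness makes the Frobenius $\phi$ an automorphism of $K$, so for every $x\in K$ the set $\{x,x^p,x^{p^2},\dots\}$ is contained in $O(x)$ and thus has cardinality at most $n+1$. Hence there is an integer $k$ with $1\leq k\leq n+1$ satisfying $x^{p^k}=x$. Setting $N=\mathrm{lcm}(1,2,\dots,n+1)$, every element of $K$ then satisfies $x^{p^N}=x$, and since this polynomial has at most $p^N$ roots in $K$ we obtain $|K|\leq p^N$. So $K$ is a finite field, say $K=\F_{p^m}$, and applying the forward direction gives $m-1=n$.

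The main obstacle is passing from the pointwise bound (each $x$ lies in some $\F_{p^{k_x}}$ with $k_x\leq n+1$) to the uniform conclusion that $K$ is finite; the least-common-multiple trick handles this at one stroke. The remainder of the argument is standard Galois theory of finite fields.
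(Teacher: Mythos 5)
Your proposal is correct and takes essentially the same route as the paper: the paper uses the orbit-size bound to conclude that the Frobenius has finite order (at most $(n+1)!$), hence $x^{p^m}=x$ for all $x$ and $K$ is finite, which is the same argument as your element-by-element period bound followed by the lcm trick. Your spelled-out version of the converse (which the paper dismisses as obvious) is also fine.
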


\begin{proof}
   As $K$ is of type $n$, order of  any $\sigma\in
                        \mathrm{Aut}\, (K)$ is at most
                        $(n+1)!$ and in
                        particular, the Fr\"{o}benius automorphism $\tau$ of
                        $K$ has finite order. If order of $\tau$ is $m$, then
                        $x^{p^{m}} = x$ for all $x\in K$. Hence $K$
                        is a finite field. As $K$ is of type $n$, it
                        is clear that $K = \mathbb{F}^{p^{n+1}}$. The converse
                        is obvious.
\end{proof}

\begin{cor}\label{c.15}
   Let $K$ be a field. Then $K$
                        is perfect of characteristic $p>0$ and is of
                        type $n<\infty$
                        if and only if $\Gamma_{\mathrm{Aut}\, K}(K)$
                        has finite number
                        of cliques.

\end{cor}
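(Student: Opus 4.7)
The plan is to prove both directions by reducing to the results just established, namely Theorem \ref{t.2} and Theorem \ref{t.10}. Neither direction should require any genuinely new ingredient.

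For the forward implication, I would assume that $K$ is perfect of characteristic $p>0$ and of type $n<\infty$. Theorem \ref{t.10} immediately gives $K=\mathbb{F}_{p^{n+1}}$, so $K$ is a finite set. Since every clique of $\Gamma_{\mathrm{Aut}\, K}(K)$ is an orbit $O(x)$ for some $x\in K$, the number of cliques is bounded by $|K|=p^{n+1}<\infty$.

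For the converse, suppose $\Gamma_{\mathrm{Aut}\, K}(K)$ has only finitely many cliques. A field is a Noetherian integral domain, so Theorem \ref{t.2} applies and forces $K$ to be a finite field. Write $K=\mathbb{F}_q$ with $q=p^{n+1}$ for some prime $p$ and some $n\geq 0$. Every finite field is perfect of positive characteristic. Finally, since $K=\mathbb{F}_{p^{n+1}}$, the ``obvious'' direction of Theorem \ref{t.10} shows that $K$ is of type $n<\infty$, completing the proof.

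I do not anticipate a serious obstacle: the real content has already been absorbed into Theorems \ref{t.2} and \ref{t.10}. The only minor point to be careful about is noting that ``finitely many cliques'' for a field in particular implies the hypothesis of Theorem \ref{t.2}, which is automatic because any field is both Noetherian and an integral domain.
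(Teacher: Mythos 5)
Your argument is correct, but the converse direction is routed differently from the paper. The paper's proof declares the corollary ``immediate from Theorem \ref{t.10} and [KP, Theorem 1.1]'', i.e.\ it invokes the Kedlaya--Poonen orbit theorem to pass from ``finitely many cliques'' to the structure of $K$. You instead observe that a field is a Noetherian integral domain and apply the paper's own Theorem \ref{t.2} to conclude directly that $K$ is a finite field, after which the converse half of Theorem \ref{t.10} finishes the job. Both routes are valid; yours has the advantage of staying entirely within results already stated in the paper (though Theorem \ref{t.2} itself rests on an external citation to [S2, Corollary 16]), while the paper's leans on the stronger Kedlaya--Poonen classification. The forward direction --- Theorem \ref{t.10} gives $K=\F_{p^{n+1}}$, which is finite, and cliques are orbits so there are at most $|K|$ of them --- is essentially the same in both treatments.
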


\begin{proof}
   The proof is immediate from Theorem \ref{t.10} and \cite[ Theorem
  1.1]{KP}.

\end{proof}

\begin{thm}\label{t.11}
Let  $ R= A_1\times A_2\times \cdots \times A_m$, where
                         $A_1, \ldots, A_m$ are local rings. Then
\begin{enumerate}
  \item If $A_i$ is not isomorphic to $A_j$ for
                         any $i\neq j$, $\aut$ is
                         isomorphic to $ \prod_{1\leq i\leq
                         m}\mathrm{Aut}\, A_i$.
\item If $m> 1$, and $A_i$ is
                         isomorphic to $A_j$ for some $i\neq j$,
                         Then $\aut\neq \mathrm{id}$. Further,  if $\aut$
                         is finite then it is of even order.
\end{enumerate}

\end{thm}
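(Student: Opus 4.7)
The plan rests on the primitive idempotents of $R$. Since each $A_i$ is local, its only idempotents are $0$ and $1$; therefore the primitive idempotents of $R$ are exactly $e_1,\ldots,e_m$, where $e_i$ has a $1$ in the $i$-th coordinate and $0$ elsewhere. Moreover $e_iR \cong A_i$ as rings, and the natural surjection $R \to e_iR$ recovers the $i$-th projection. Any ring automorphism preserves idempotents and their primitivity, so each $\sigma \in \aut$ induces a permutation $\pi \in S_m$ with $\sigma(e_i) = e_{\pi(i)}$, and restricting $\sigma$ gives a ring isomorphism $e_iR \to e_{\pi(i)}R$, which translates to $A_i \cong A_{\pi(i)}$.

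For part (1), the pairwise non-isomorphism hypothesis forces $\pi$ to be the identity for every $\sigma$. Thus each $\sigma$ fixes every $e_i$ and restricts to an element $\sigma_i \in \mathrm{Aut}\, A_i$. The assignment $\sigma \mapsto (\sigma_1,\ldots,\sigma_m)$ is a group homomorphism $\aut \to \prod_i \mathrm{Aut}\, A_i$, with inverse given by the coordinate-wise action $(\tau_1,\ldots,\tau_m) \mapsto \tau_1 \times \cdots \times \tau_m$, yielding the claimed isomorphism.

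For part (2), choose $i \neq j$ with $A_i \cong A_j$, say by $\phi \colon A_i \to A_j$. Define $\tau \in \aut$ by swapping the $i$-th and $j$-th coordinates through $\phi$: the $i$-th entry of $\tau(a_1,\ldots,a_m)$ is $\phi^{-1}(a_j)$, the $j$-th entry is $\phi(a_i)$, and every other entry is unchanged. A direct check shows that $\tau$ is a ring automorphism with $\tau \neq \mathrm{id}$ and $\tau^2 = \mathrm{id}$, so $\aut$ is non-trivial; and if $\aut$ is finite, Lagrange's theorem applied to the subgroup $\langle \tau \rangle$ shows that $2$ divides $|\aut|$.

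The only point to check carefully is that a ring automorphism of $R$ genuinely permutes the primitive idempotents (not merely idempotents in general) so that the product decomposition is transported to itself; after that bookkeeping, both statements follow directly from the constructions above.
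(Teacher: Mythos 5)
Your proposal is correct and follows essentially the same route as the paper: identify the coordinate idempotents $e_1,\dots,e_m$, show every automorphism permutes them (the paper argues via pairwise orthogonal idempotents summing to $1$, you via primitivity — the same fact), deduce part (1) from the non-isomorphism hypothesis, and exhibit the order-two swap automorphism for part (2). The only cosmetic difference is that you carry the isomorphism $\phi\colon A_i\to A_j$ explicitly where the paper simply takes $A_1=A_2$; both are fine.
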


\begin{proof}
  \begin{enumerate}
    \item  Local rings have no
                         non-trivial idempotents. Hence any
                         idempotent of $R$ is of the form $ a=
                         (a_1,a_2,\ldots,a_m)$ where $a_i=0$
                         or  $a_i=1$  for each $i$. Denote by $e_i$ the
                         element
                         $$ e_i= (0,\ldots,0,1,0,\ldots,0)\in R\quad
                         i=1,2,\ldots,m
                         $$
                         where $1$ is the identity in $A_i$ and is at
                         the ith
                         place. Then $e_1,\ldots,e_m$ are $m$ pairwise
                         orthogonal idempotents in $R$
                         such that $e_1+e_2+\cdots +e_m = 1$. For
                         any $\sigma \in \aut$, $1=
                         \sigma(e_1)+\cdots+\sigma(e_m)$ and
                         $\sigma(e_1),\ldots,\sigma(e_m)$ are
                         pairwise orthogonal idempotents in $R$.
                         Thus $ \sigma(e_i)= e_j$ for some $j$, and
                         hence
$$ \sigma(A_i)=\sigma(
                         Re_i)=Re_j=A_j.$$
                         As $A_i$ is not isomorphic to $A_j$ for
                         $i\neq j$, we conclude that
                         $\sigma(e_i)=e_i$ for all $i$.
                         Therefore the restriction of $\sigma$ to
                         $A_i$ is an automorphism of $A_i$. This
                         proves the first assertion.
\item  Without  loss of generality, we may
                         assume that $A_1$ is isomorphic to $A_2$.
                         In fact, we can take $A_1=A_2$. Then the map $
                         \tau : R\longrightarrow R$ given by
$$ a=
                         (a_1,a_2,\ldots, a_m)\longmapsto
                         (a_2,a_1,\ldots, a_m)$$
                           is a non identity automorphism of $R$
                           such that $\tau ^2 = 1$. Hence the second
                         assertion follows.

  \end{enumerate}
\end{proof}

\begin{rmk}\label{r.12}
  \begin{enumerate}
   \item   If $\aut$ is of odd
                           order, then $A_i$ is not isomorphic to
                           $A_j$ for $i\neq j$.
\item  The Theorem is valid even if we
                           assume that $A_i$ has no non-trivial
                           idempotent for any $i$, instead of assuming
                           $A_i$ to be local.

  \end{enumerate}

\end{rmk}

\begin{cor}\label{c.13}
Let $R,S$ be two local rings such
                           that $R$ is not isomorphic to $S$. Assume
                           that for $a\in R$ and $b\in S$, we have
                           $\deg a=m$, and $\deg b=n$. Then for the
                           element $(a,b)\in
                           R\times S$,
$$ \deg (a,b) =  (\deg a+1)(\deg b+1)-1.$$

\end{cor}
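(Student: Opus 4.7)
The plan is to reduce the orbit calculation for $(a,b) \in R \times S$ to a product of the individual orbits, using Theorem \ref{t.11}(1) as the main tool.

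First, I would invoke Theorem \ref{t.11}(1). Since $R$ and $S$ are non-isomorphic local rings, that theorem gives an isomorphism $\mathrm{Aut}(R \times S) \cong \mathrm{Aut}\, R \times \mathrm{Aut}\, S$, where an element $(\sigma,\tau)$ on the right acts on $R \times S$ by $(r,s) \mapsto (\sigma(r), \tau(s))$. In particular, every automorphism of $R \times S$ preserves the two factors.

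Second, I would use this to identify the orbit of $(a,b)$. Under the action of $\mathrm{Aut}(R \times S)$, we have
\[
O\bigl((a,b)\bigr) \;=\; \{(\sigma(a), \tau(b)) : \sigma \in \mathrm{Aut}\, R,\ \tau \in \mathrm{Aut}\, S\} \;=\; O(a) \times O(b),
\]
where the first orbit is taken in $R$ and the second in $S$. Consequently
\[
\bigl|O\bigl((a,b)\bigr)\bigr| \;=\; |O(a)| \cdot |O(b)|.
\]

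Third, since the clique containing a vertex in $\Gamma_{\mathrm{Aut}}(\cdot)$ coincides with its orbit, the degree of any vertex equals the size of its orbit minus one (the vertex itself). Thus $|O(a)| = \deg a + 1 = m+1$ and $|O(b)| = \deg b + 1 = n+1$, which gives
\[
\deg(a,b) \;=\; |O((a,b))| - 1 \;=\; (m+1)(n+1) - 1 \;=\; (\deg a + 1)(\deg b + 1) - 1,
\]
as desired.

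There is no real obstacle here; the content is packaged into Theorem \ref{t.11}(1). The only thing worth double-checking is that $(a,b)$ and $(\sigma(a),\tau(b))$ are genuinely distinct vertices whenever $(\sigma(a),\tau(b))\neq(a,b)$, so that the orbit size minus one correctly counts edges incident to $(a,b)$; this is automatic from the definition of $\Gamma_{\mathrm{Aut}}(R \times S)$ and the fact that orbits under a group action partition the set.
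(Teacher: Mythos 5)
Your proposal is correct and follows exactly the route of the paper's own (much terser) proof: invoke Theorem \ref{t.11}(1) to split $\mathrm{Aut}(R\times S)$ as $\mathrm{Aut}\, R\times\mathrm{Aut}\, S$, identify $O((a,b))=O(a)\times O(b)$, and use that the degree of a vertex is its orbit size minus one. The extra care you take in checking that orbit size minus one really counts incident edges is a reasonable elaboration of what the paper leaves as ``immediate.''
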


\begin{proof}
  By the Theorem, $\mathrm{Aut}\,(R\times S)$ is
                           isomorphic to $\aut\times \mathrm{Aut}\,(S)$.
                           Therefore, it is immediate that
$$
                           \deg (a,b) = (\deg a+1)(\deg b+1)-1$$.
\end{proof}

\begin{cor}\label{c.14}
Let $R$ be a local ring of type $m$
                           and $S$ be a local ring of type $n$, $m\neq
                           n$. Then $R\times S$ is of type
                           $(m+1)(n+1)-1$.
\end{cor}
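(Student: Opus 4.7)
The plan is to derive the corollary directly from Corollary \ref{c.13} once we check that its hypotheses apply. The first step is to observe that $R$ and $S$ cannot be isomorphic: isomorphic rings produce isomorphic automorphism groups acting compatibly on the underlying sets, so the corresponding $\Gamma_{\aut}$ graphs have identical degree sequences and in particular the same type. Since $m\neq n$, this forces $R\not\cong S$, and we are in the situation of Corollary \ref{c.13}.

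Next I would apply Corollary \ref{c.13} to an arbitrary element $(a,b)\in R\times S$ to obtain the pointwise formula
$$\deg(a,b)=(\deg a+1)(\deg b+1)-1.$$
Because $R$ is of type $m$, we have $\deg a\leq m$ for every $a\in R$, with equality achieved at some $a_0\in R$; likewise $\deg b\leq n$ with equality at some $b_0\in S$. Substituting into the displayed formula gives
$$\deg(a,b)\leq (m+1)(n+1)-1$$
for all $(a,b)\in R\times S$, and equality holds at $(a_0,b_0)$. By Definition \ref{d.3}, this says exactly that $R\times S$ is of type $(m+1)(n+1)-1$.

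There is no real obstacle here: the whole proof is a one-line computation on top of Corollary \ref{c.13}. The only subtlety is that $m\neq n$ is essential, since it is what allows us to invoke the product decomposition $\mathrm{Aut}(R\times S)\cong\mathrm{Aut}\,R\times\mathrm{Aut}\,S$ from Theorem \ref{t.11}(1); without that decomposition the degree of $(a,b)$ could be strictly larger than $(\deg a+1)(\deg b+1)-1$, as the swap-like automorphisms of Theorem \ref{t.11}(2) illustrate.
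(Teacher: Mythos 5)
Your proof is correct and follows the same route as the paper, which simply declares the result immediate from Corollary \ref{c.13}; you usefully make explicit the one point the paper leaves tacit, namely that $m\neq n$ forces $R\not\cong S$ so that Corollary \ref{c.13} (and hence the product decomposition of Theorem \ref{t.11}(1)) applies. The monotonicity argument giving the maximum at $(a_0,b_0)$ is also right.
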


\begin{proof}
   The result is immediate from
                            Corollary \ref{c.13}.
\end{proof}
\begin{thm}\label{t.15} Let $R$ be a finite ring. Then $\Gamma_{\aut}(R)$ is planar if and only if $R$
                  is of type $\leq 3$.\end{thm}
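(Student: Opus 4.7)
My plan rests on the structural observation — already noted after the definition of $O(x)$ in the preliminaries — that $\Gamma_{\aut}(R)$ is a disjoint union of complete subgraphs, one per orbit of $\aut$ on $R$. Any two distinct elements in the same orbit are adjacent by the very definition of the edge relation, while two elements in different orbits cannot be adjacent, since no automorphism sends a point of one orbit to a point of another. Thus
$$\Gamma_{\aut}(R)=\bigsqcup_{O} K_{|O|},$$
where $O$ ranges over the $\aut$-orbits on $R$, and each vertex $x$ satisfies $\deg x = |O(x)| - 1$.

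Once this decomposition is in hand, the theorem reduces to a standard planarity fact. A disjoint union of graphs is planar if and only if every component is planar, and $K_n$ is planar precisely when $n\le 4$ (since $K_4$ is planar while $K_5$ is not, by Kuratowski's theorem, or equivalently by the Euler-formula bound $|E|\le 3|V|-6$ for simple planar graphs). Hence $\Gamma_{\aut}(R)$ is planar if and only if every orbit has size at most $4$, if and only if $\deg x\le 3$ for every $x\in R$. The finiteness of $R$ ensures that the maximum of $\deg x$ over $R$ is a well-defined nonnegative integer, so this last condition is exactly the assertion that $R$ is of type $\le 3$.

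There is no substantive obstacle: the only step that requires any care is verifying the decomposition of $\Gamma_{\aut}(R)$ into a disjoint union of cliques, after which the equivalence with ``type $\le 3$'' is mechanical. I would write the proof in two sentences (decomposition, then application of the planarity of $K_4$ versus non-planarity of $K_5$).
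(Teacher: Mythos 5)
Your proposal is correct and follows essentially the same route as the paper: both arguments rest on the observation that $\Gamma_{\aut}(R)$ is a disjoint union of cliques $K_{|O(x)|}$, so planarity fails exactly when some orbit has size $\geq 5$, i.e.\ when some vertex has degree $\geq 4$. You spell out the orbit decomposition a bit more explicitly than the paper does, but the substance is identical.
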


                  \begin{proof}
                              If $\Gamma_{\aut}(R)$ is planar, then it does not contain $K_5$ by
                              [3, Theorem 11.13].
                              Hence $\deg x \leq 3$ for all $x\in R$. This proves $R$ is of type $\leq 3.$
                              The converse is clear since $K_n$ is planar for all $n \leq 4$ and $\Gamma_{\aut}(R)$
                              is a union of $K_{n's}$ .

                              \end{proof}
                              \begin{thm} For any ring $R, Aut(R)$ is a subgroup of  $Aut(\Gamma_{\aut}(R))$, but
                               the converse is not true in general. \end{thm}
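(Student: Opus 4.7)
The plan is to produce an explicit group homomorphism $\Phi\colon \aut \to \mathrm{Aut}(\Gamma_{\aut}(R))$ by sending a ring automorphism $\alpha$ to the same bijection viewed merely as a permutation of the vertex set $R$, verify that $\Phi$ is a well-defined injective homomorphism, and then exhibit one ring for which $\Phi$ fails to be surjective.

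First I would check that adjacency is preserved. Fix $\alpha\in\aut$ and suppose $\{x,y\}$ is an edge of $\Gamma_{\aut}(R)$, so $\sigma(x)=y$ for some $\sigma\in\aut$. Conjugation gives $\tau:=\alpha\sigma\alpha^{-1}\in\aut$, and $\tau(\alpha(x))=\alpha(\sigma(x))=\alpha(y)$, so $\{\alpha(x),\alpha(y)\}$ is an edge. Applying the same argument to $\alpha^{-1}$ shows that non-edges are preserved, so $\alpha$ is a graph automorphism. Because composition in $\aut$ and in $\mathrm{Aut}(\Gamma_{\aut}(R))$ is just composition of set maps on $R$, $\Phi$ is a homomorphism, and it is injective since a ring automorphism that is the identity permutation of $R$ is the identity ring map.

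For the converse, I would give a concrete $R$ where $\Phi$ is not surjective. Take $R=\Z$: the only ring endomorphism of $\Z$ is the identity, so $\aut(\Z)$ is trivial and $\Gamma_{\aut}(\Z)$ is totally disconnected on a countably infinite vertex set, whose automorphism group is the full symmetric group on $\Z$. A finite witness is $R=\Z_2[X]/(X^2)$: a direct check (as in Example \ref{e.5} with $p=2$) gives $\aut=\{\mathrm{id}\}$, while $\Gamma_{\aut}(R)$ has four vertices and no edges, so its automorphism group is $S_4$.

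There is no real obstacle here; the only subtle point is remembering that graph automorphisms must preserve non-adjacency as well as adjacency, which is handled automatically because $\aut$ is closed under inverses. I would keep the counterexample short and prefer the $\Z$ example because it avoids any computation, with the finite example mentioned as an alternative.
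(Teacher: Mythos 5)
Your proof is correct and follows essentially the same route as the paper: the forward direction is the same conjugation argument ($\alpha\sigma\alpha^{-1}$ sends $\alpha(x)$ to $\alpha(y)$), and the converse is witnessed by a ring with trivial automorphism group whose graph is totally disconnected. The paper uses $\Z_4$ where you use $\Z$ or $\Z_2[X]/(X^2)$, an immaterial difference; your explicit remarks about preservation of non-edges and injectivity of the embedding are minor tidiness the paper leaves implicit.
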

                               \begin{proof} Let $ a,b\in \Gamma_{\aut}(R)$ be connected.Then there exists
                              $ \sigma \in Aut(R)$ such that $\sigma(a)= b$. Now for any $\theta\in Aut(R),
                               \theta \sigma \theta^{-1}\theta(a) =\theta(b).$ Thus the
                              direct part holds.
                              For the converse, if $R = \Z_4,$ then $Aut(R) = Id.$, but
                              $Aut(\Gamma_{\Z_4 }) = S_4$.

                               \end{proof}

                           \section{Rings with $\Gamma_{\aut}(R)$ totally
                           disconnected}

                            Let $R$ be a finite ring. In this section, we
                            shall study the structure of $R$ with
                            $\Gamma_{\aut}(R)$ totally disconnected.
                             Observe that $\Gamma_{\aut}(R)$ is
                             totally disconnected if and only if
                             $\aut =\mathrm{id}$. By \cite[Theorem
                            8.7]{AM}, any finite ring $R$ is a direct product
                             of finite local rings uniquely. As
                             $\Gamma_{\aut}(R)$ is totally disconnected,
                             each of the factor local ring has trivial
                            automorphism
                             groups. Therefore we will  study
                            structure of $R$ when $R$ is
                             local with $\aut= \mathrm{id}$.
                 \begin{thm}\label{t.15}
Let $(R,M)$ be a finite local ring such
                             that $\Gamma_{\aut}(R)$ is totally
                             disconnected. Then $R$ is isomorphic to
                             $\Z_{p^{\alpha}}$ or $\Z_2[X]/(X^2)$ where $p$
                             is a prime.
                 \end{thm}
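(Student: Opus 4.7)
The plan is to classify finite local rings with $\aut = \mathrm{id}$, since $\Gamma_{\aut}(R)$ is totally disconnected iff this holds. I would proceed in two stages.

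First I would show $R/M \cong \F_p$ for some prime $p$. The key input is that for a finite local ring, every automorphism of the residue field lifts to a ring automorphism. I would prove this using Hensel's lemma: picking a generator $\bar\alpha$ of $R/M$ over $\F_p$, lifting its minimal polynomial to a monic $f \in \Z_{p^c}[X]$ with $p^c = \mathrm{char}(R)$, and Hensel-lifting $\bar\alpha$ to a root $\alpha \in R$ of $f$ exhibits a Galois coefficient subring $S = \Z_{p^c}[\alpha] \cong \mathrm{GR}(p^c, n)$ with $n = [R/M : \F_p]$. The Frobenius automorphism of $S$ extends to $R$, so $n > 1$ would contradict $\aut = \mathrm{id}$, forcing $n = 1$.

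Second, with $R/M = \F_p$, let $P \cong \Z_{p^c}$ be the prime subring. Then $R = P + M$ and $P \cap M = pP$. If $M = pP$, then $R = P + M = P = \Z_{p^c}$, giving one of the desired forms. Otherwise, there exists $x \in M \setminus P$, and I would produce a nontrivial automorphism unless $R = \Z_2[X]/(X^2)$. The analysis splits on characteristic. When $c \geq 2$, I would construct an infinitesimal automorphism $\sigma = \mathrm{id} + \delta$, where $\delta$ is a carefully chosen $P$-derivation landing in $p^{c-1} R$; the identity $p \cdot p^{c-1} = 0$ is the essential reason such $\delta$ can be made to respect the defining relations, yielding a nontrivial $\sigma$. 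When $c = 1$, we have $R = \F_p \oplus M$ as $\F_p$-vector spaces, and ring automorphisms of $R$ correspond to $\F_p$-algebra automorphisms of the nonunital algebra $M$. If $M^2 = 0$, this group is $\mathrm{GL}_d(\F_p)$ with $d = \dim_{\F_p} M$, and triviality forces either $d = 0$ (giving $R = \Z_p$) or $d = 1$ with $p = 2$ (giving $R = \Z_2[X]/(X^2)$). If $M^2 \neq 0$, a perturbation of a minimal generator of $M$ by a nonzero element of $M^{s-1}$, where $s$ is the nilpotence index of $M$, gives a nontrivial $\F_p$-algebra automorphism.

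I expect the main obstacle to be verifying, in the cases $c \geq 2$ and $c = 1$ with $M^2 \neq 0$, that the proposed derivation or perturbation is consistent with every defining relation of $R$ as a $P$-algebra. This reduces to a computation with formal partial derivatives, exploiting the nilpotency of $p^{c-1}$ (respectively elements of $M^{s-1}$) to force the derivation to kill each relation. The extension of Frobenius from the coefficient subring to all of $R$ in the first stage likewise requires a short structural argument that warrants some care.
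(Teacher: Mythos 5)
There is a genuine gap in your first stage. The lemma you rely on --- that for a finite local ring every automorphism of the residue field lifts to a ring automorphism, so that the Frobenius of the coefficient ring $S=\mathrm{GR}(p^c,n)$ extends to $R$ --- is false in general. A counterexample: let $\omega\in\F_4$ satisfy $\omega^2+\omega+1=0$ and put
$R=\F_4[x,y]/(xy,\ x^4,\ y^7,\ x^3-\omega y^6)$,
a local ring of $\F_4$-dimension $9$ with residue field $\F_4$ and $y^6\neq 0$. For any $\sigma\in\aut$ one checks in turn (using that fourth powers are additive in characteristic $2$, that $x^4=0$ while $y^4\neq 0$, and reading the relation $\sigma(x)\sigma(y)=0$ modulo $M^3$ and then $M^4$) that $\sigma(x)\in\F_4^{*}x+\F_4x^2+M^3$ and $\sigma(y)\in\F_4^{*}y+M^2$; since every unit of $\F_4$ has cube $1$, this forces $\sigma(x)^3=x^3$ and $\sigma(y)^6=y^6$, whence $\sigma(\omega)y^6=\sigma(x^3)=x^3=\omega y^6$ and so $\sigma(\omega)=\omega$. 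Thus no automorphism of this $R$ induces the Frobenius on $R/M$. (The ring has plenty of other automorphisms, so it is not a counterexample to the theorem, only to your lemma.) Consequently your reduction to the case $R/M\cong\F_p$ is unsupported, and your second stage, which presupposes that reduction, leaves uncovered all rings with larger residue field (e.g.\ $\F_4$, $\F_4[X]/(X^2)$, or the ring above), each of which must still be shown to have a nontrivial automorphism.

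For comparison, the paper builds the same Hensel-lift subring $B=\Z_{p^{\alpha}}[a]\cong\Z_{p^{\alpha}}[T]/(f(T))$ surjecting onto $R/M$, but it invokes the Frobenius lift only in the case $B=R$, where Hensel's lemma genuinely produces it. When $B\subsetneq R$ it abandons the residue field altogether: it chooses a maximal subring $A$ with $B\subseteq A\subsetneq R$, writes $R\cong A[T]/(T^2-\lambda^2,\ aT-a\lambda)$, and uses the socle of $A$ to manufacture a nontrivial $A$-algebra automorphism ($T\mapsto T+v$ or $T\mapsto (1+v)T$), i.e.\ one acting trivially on $R/M$, with $\Z_2[X]/(X^2)$ as the sole escape. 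Your stage-two perturbations (adding a derivation valued in the socle, or shifting a minimal generator by an element of $M^{s-1}$) are in the same spirit and look workable as far as they go; to repair the proof you would need to run such ``vertical'' constructions over an arbitrary residue field, or over a maximal proper subring as the paper does, rather than first forcing $R/M=\F_p$.
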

                 \begin{proof}
                     As $\Gamma_{\aut}(R)$ is totally
                             disconnected, $\aut = \mathrm{id}$.
                             Since $R$ is a finite local ring, its
                             characteristic is
                             $p^\alpha$ for some prime $p$.
                              Then
                             $ \Z_{p^\alpha} \subset R $. Thus the
                             characteristic of $ R/M $ is $p$.

If $R=\Z_{p^\alpha}$, we have nothing to prove. So we assume that
                             $R\neq \Z_{p^{\alpha}}$.

The structure of the proof is as follows.
\begin{enumerate}
  \item We first show that there is a subring $B\subset R$ of the form
    $\Z_{p^{\alpha}}[T]/(f(T))$ where $f(T)$ is a monic polynomial
    in $\Z_{p^{\alpha}}[T]$ such that the induced map $B\to R/M$ is
    onto.
\item If $B=R$  then we show that $R$ has
  non-trivial automorphisms contradicitng our hypothesis.
\item If $B\neq R$, we choose a maximal subring $B\subset A\subsetneq
  R$ and show that $R$ has non-trivial automorphisms over $A$, again
  contradicting our hypothesis, except when $p=2$ and the only
  exception being when $R=\Z_2[X]/(X^2)$.

\end{enumerate}

We  show that there is a subring $B \subset R$ of the form
$\Z_{p^\alpha}[a]$ such that the natural map $B\to R/M$ is onto. If
$R/M= \Z_p$, we may take $B=\Z_{p^\alpha}$. So, let us assume that
$R/M\neq \Z_p$.
As  $\Z_{p}$ is a
                             perfect field and $R/M$ is a finite
                             separable extension of $ \Z_p $, $ R/M $ is
                             a simple field extension of $ \Z_p $ and thus
                               $ R/M =
                             \Z_p[\obar{x}]$ for some element
                             $0\neq \obar{x} \in R/M$. Let $ f_1(T)$
                             be the irreducible polynomial of
                             $ \obar{x} $ over $ \Z_p $.  Choose
                             $f(T)\in R[T]$, a monic polynomial, such
                             that $ f_1(T)$ is the image of $ f(T) $ in
                             $ R/M[T]$. Since $\obar{x}$ is separable
                 over $\Z_p$, by Hensel's Lemma, there
                             exists a lift $ a\in R $ of $\obar{x}$
                 such that $f(a)=0$.
Denote by  $ B$ the subring $\Z_{p^\alpha}[a]$ of $R$. It is clear that the
natural map $B\to R/M$ is onto.

Next  We claim that  $ \Z_{p^\alpha}[T]/(f(T)) $ is isomorphic
                             to  $B$.
                              Consider
                             the natural $\Z_{p^\alpha}$- epimorphism:
$$ \theta :\Z_{p^\alpha}[T]\longrightarrow B, \quad T\mapsto a. $$

  Then, clearly $f(T)\in
                             \mathrm{Ker}\, \theta$. Hence $\theta$ induces an
                             epimorphism:
$$ \obar{\theta}:
                             \Z_{p^\alpha}[T]/(f(T))\longrightarrow
                             B.$$
 Notice
                             that, as $\Z_p[T]/(f_1(T))$ is a field,
                             $\obar{p}$, the image of $p$ in $\Z_{p^\alpha}$,
                             generates the unique maximal ideal in
                            $ \Z_{p^\alpha}[T]/ (f(T))$.  Consequently,  every
                            ideal in  $ \Z_{p^\alpha}[T]/ (f(T))$
                            is generated by a power of $\obar{p}$.
In particular so is $\mathrm{Ker}\,\obar{\theta}$ and so let this ideal
                             be $(\obar{p}^k)$ for some integer $k$. Then
                            $\obar{\theta}(\obar{p}^k) = \obar{p}^k= 0 $ in
                            $R$. This implies $k=\alpha$. Thus
                            $\mathrm{Ker}\, \obar{\theta}=0$. Hence
                            $\obar{\theta}$ is an isomorphism proving
                             our claim.

We, now, consider
                           the case $B = R$. In this case $R$ is
                           isomorphic to $\Z_{p^\alpha}[T]/(f(T))$ and
                           since we have assumed that $R\neq
                           \Z_{p^\alpha}$, we see that the monic
                           polynomial $f$ has degree greater than one.
                            Its image $f_1(T)$
                           in $\Z_p[T]$ is an irreducible
                           polynomial. Consider the Fr\"{o}benius
                           automorphism $\tau$ of
                           $\Z_p[T]/(f_1(T))=R/M$.  Since  $\deg f_1(T)>1$ the automorphism
                             $\tau$ can not be identity.

For any automorphism $\beta$ of
                           $\Z_p[T]/(f_1(T))$, the composite map
                            $$ \Z_p[T] \stackrel{\pi}{\longrightarrow}
                            \Z_p[T]/(f_1(T))\stackrel{\beta}\backsimeq
                            \Z_p[T]/(f_1(T))$$
                            is onto and if $\beta\circ\pi (T) =
                            u$, then $f_1(u)= 0$. We know that
                            $f_1(T)$ is irreducible over $ \Z_p $.
                            Hence $u$ is a simple root of $f_1(T)$. We have
                            $ f(X)\in \Z_{p^{\alpha}} [X] \subset R[X]$,
                            and its image is $f_1(X)$ in
                            $\Z_p[X]\subset R/M[X]$. As seen above,
                            by Hensel's Lemma, there exists a lift  $a\in R$
                            of $u$ such that $f(a)=0$.  Then consider
                            the homomorphism:
$$
                            \psi: \Z_{p^\alpha}[T]\rightarrow R=
                            \Z_{p^\alpha}[T]/(f(T))\quad T\mapsto a $$
                            Since $f(a)=0$, this  map  induces
                            an  endomorphism $$ \obar{\psi}:
                            R=\Z_{p^\alpha}[T]/(f(T))\longrightarrow
                            R=\Z_{p^\alpha}[T]/(f(T))$$
                            and the diagram :
                               $$\begin{array}{ccc}
                               \Z_{p^\alpha}[T]/(f(T))&
                               \stackrel{\obar{\psi}}\longrightarrow &
                               \Z_{p^\alpha}[T]/(f(T))\\
                             \downarrow &   & \downarrow \\
                             \Z_p[T]/(f_1(T))&
                             \stackrel{\beta}\backsimeq

                              &
                             \Z_p[T]/(f_1(T))
                             \end{array}$$
is commutative. As
                             $\beta$ is obtained from $\obar{\psi}$
                             after tensoring with $\Z_p$ over
                             $\Z_{p^\alpha}$, $\obar{\psi}$ is onto.
                             Hence, as $R$ is finite, $\obar{\psi}$
                             is an automorphism. Finally, taking
                             $\beta=\tau$ and since $\tau\neq
                             \mathrm{id}$,  $\obar{\psi}\neq
                            \mathrm{ id}$. Thus we arrive at a
                             contradiction to our hypothesis that
                             $\aut$ is trivial, in this case.

Lastly, we look at the case when $B\neq R$. Then we may choose a subring
                            $A$ of $R$,  with  $B\subset
                            A$, $A\neq R$ and maximal with respect to this
                            property. Then $A$ is a local ring with
                            maximal ideal $M_A= M\bigcap A$, and
                            $R=A[\lambda]$ for every $\lambda \in
                            R-A$.

Since $B$ maps onto $R/M$, so does $A$. If $M\subset A$, and in
                            particular, if $M=M_A$, then this would
                            force $A=R$, which is not the case. So,
                            $M\neq M_A$.

Since $R$ is a finitely
                            generated module over $A$, by Nakayama's
                            lemma, we also have $M_AR+A\neq R$. But,
                            $A\subset M_AR+A\subsetneq R$ and $M_AR+A$
                            is naturally a subring of $R$ and thus by
                            maximality, we must have $A=M_AR+A$ and
                            thus $M_AR\subset
                            A$. Since $1\not\in M_AR$, and $M_A\subset
                            M_AR\subsetneq A$, we see that
                            $M_AR= M_A$. So, we have shown,

\begin{equation}\label{eq1}
               M_AR=M_A\subsetneq M
   \end{equation}

                            Choose $ \lambda\in M-M_A$ such that
                            $\lambda^2\in A$. This can always be
                            done as elements of $M$ are nilpotent.
                            Thus $R=A[\lambda]$ where $\lambda \in
                            R-A$ and $\lambda^2\in A$ and in fact in
                            $M_A$. Now, consider
                            the $A$-algebra epimorphism:
$$  \psi: A[T]\longrightarrow R,\quad  T\mapsto \lambda.$$

 One clearly has $\psi(T^2-\lambda^2)=0$. Similarly, for any element
 $a\in M_A$, $a\lambda\in M_A$ by equation (\ref{eq1}) above. Thus
 we see that,
$$\mathrm{Ker}\, \psi\supset (T^2-\lambda^2, aT-a\lambda)=J$$
where $a$ runs through elements of $M_A$.

We claim that the above inclusion is an equality. If
                           $f(T)\in
                          \mathrm{Ker}\,\psi$, then, we can write
$$  f(T) =
                          (T^2-\lambda^2) g(T) + aT-b $$

                          where $g(T), aT-b\in A[T]$. By assumption,

                            $$  0  = f(\lambda)=a\lambda-b.$$

                         This forces  $a$ to be in $M_A$, since
                         otherwise $a$ is a
                         unit, and in that case
                         $\lambda =a^{-1}(a\lambda)= a^{-1}b\in A$
                         contradicting our choice of $\lambda$.
                         Thus $aT-b=aT-a\lambda\in J$ establishing our claim.
Thus we have,

$$  \obar{\psi}: A[T]/J\simeq R .$$

 Let $\mathbf{a} $  be the socle of $A$. If $\mathbf{a} =A$, then $A$
 is a field. From the above isomorphism, we have $R=A[T]/(T^2)$ since
 $\lambda^2\in M_A=0$ and $aT-b=0$ since $a,b\in M_A=0$ and thus
 $J=(T^2)$. If $u\in A$ is a unit, then $T\mapsto uT$ gives an
 automorphism of $R$ and it is non-trivial if $u\neq 1$. So, we may
 assume that $1$ is the only unit in $A$ and then $A=\Z_2$, leading us
 to the exception mentioned in the theorem.

So, from now on, let us assume that $\mathbf{a}\subset M_A$. Now, we
show that $R$ has a non-trivial automorphism as $A$-algebras, proving
the theorem.

Define an ideal $I$ of $A$ by,
                         $$I = (0:\lambda)_A = \{ x\in A\mid x\lambda =
                         0\}.$$
                    Since     $ \lambda \neq 0$ clearly  $ I\neq A$
                         and hence $I\subset M_A$.
We look at two cases, either  $\mathbf{a}$ is contained in  $I$ or
                         not. First we consider the case when
                         $\mathbf{a}\subset I$. Let $0\neq v\in
                         \mathbf{a}$ and consider the $A$-algebra
                         automorphism,
$$\alpha: A[T]\to A[T],\quad T\mapsto T+v.$$

We want to show that $\alpha$ respects the ideal $J$. We have,
\begin{align*}
  \alpha(T^2-\lambda^2)&=(T+v)^2-\lambda^2\\
&=(T^2-\lambda^2)+2vT+v^2\\
&=(T^2-\lambda^2)+(2vT-2v\lambda)+2v\lambda+v^2\\
&=(T^2-\lambda^2)+(2vT-2v\lambda)
\end{align*}
since $v^2=0$ because $v\in\mathbf{a}\subset M_A$ and $2v\lambda=0$
since $v\in I$. Thus $\alpha(T^2-\lambda^2)\in J$. Similarly, for
$a\in M_A$,
\begin{equation*}
  \alpha(aT-a\lambda)=a(T+v)-a\lambda=aT-a\lambda+av=aT-a\lambda
\end{equation*}
since $av=0$. Thus, $\alpha(aT-a\lambda)\in J$. So, we get an induced
surjective $A$-algebra homomorphism,
$$\obar{\alpha}: R=A[T]/J\to A[T]/J=R,$$
which then must be an automorphism. Since $T\mapsto T+v$ and $v\neq
0$, this is a non-trivial automorphism.

Lastly, we consider the case when the socle is not contained in
$I$, but the socle is contained in $M_A$. Then choose an element $v$
in the socle not contained in $I$. Consider the $A$-algebra
automorphism
$$\beta: A[T]\to A[T],\quad T\mapsto (1+v)T.$$
As before, we proceed to check that this map respects the ideal $J$.
\begin{align*}
  \beta(T^2-\lambda^2)&=(1+v)^2T^2-\lambda^2\\
&=(T^2-\lambda^2)+2vT^2+v^2T^2\\
&=(T^2-\lambda^2)+ 2v(T^2-\lambda^2)+2v\lambda^2+v^2T^2\\
&=(1+2v)(T^2-\lambda^2)
\end{align*}
since $v^2=0$ and $v\lambda^2=0$ by virtue of the fact that $v$ is in
the socle as well as in $M_A$ and $\lambda^2\in M_A$. So,
$\beta(T^2-\lambda^2)\in J$.

Similarly, for any $a\in M_A$ one has,
$$\beta(aT-a\lambda)=a(1+v)T-a\lambda=(aT-a\lambda)+avT=aT-a\lambda,$$
since $av=0$. Thus $\beta(aT-a\lambda)\in J$. So, we get an induced
$A$-algebra surjection,
$$\obar{\beta}: R\to R,$$
which  is an isomorphism. Further, since
$\obar{\beta}(\lambda)=\lambda+v\lambda$ and $v\lambda\neq 0$ since
$v\not\in I$, this is a non-trivial automorphism.

This concludes the proof of the theorem.

 \end{proof}

                             \begin{cor}\label{t.16}
Let $R$ be a finite ring
                             such that
                             $\Gamma_{\aut}(R)$ is totally disconnected.
                             Then $R$ is  a finite product of rings
                             of the type $\Z_{p^\alpha}$  and
                             $\Z_2[X]/(X^2)$.

                 \end{cor}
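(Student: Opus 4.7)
The plan is to reduce directly to Theorem \ref{t.15} via the unique decomposition of a finite ring into local factors, using Theorem \ref{t.11} to transfer the triviality of $\aut$ to triviality of the automorphism group of each factor.

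First, by \cite[Theorem 8.7]{AM} (as already invoked in the section introduction), write $R=A_1\times A_2\times \cdots\times A_m$ uniquely as a product of finite local rings. The assumption that $\Gamma_{\aut}(R)$ is totally disconnected is equivalent to $\aut=\mathrm{id}$, so in particular $\aut$ is of odd order (indeed, trivial). Theorem \ref{t.11}(2) then guarantees that the factors are pairwise non-isomorphic: if $A_i\cong A_j$ for some $i\neq j$, the transposition automorphism constructed there would give $\aut$ a nontrivial element of order $2$, contradicting $\aut=\mathrm{id}$.

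Once the factors are known to be pairwise non-isomorphic, Theorem \ref{t.11}(1) applies and yields $\aut\cong \prod_{i=1}^m \mathrm{Aut}\,A_i$. Since the left-hand side is trivial, each $\mathrm{Aut}\,A_i$ is trivial, i.e.\ each $\Gamma_{\mathrm{Aut}\,A_i}(A_i)$ is totally disconnected. Now Theorem \ref{t.15} applies to each local factor $A_i$ and forces $A_i\cong \Z_{p^\alpha}$ for some prime $p$ and some $\alpha\geq 1$, or else $A_i\cong \Z_2[X]/(X^2)$. Assembling the factors, $R$ is a finite product of rings of the stated form.

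There is no real obstacle here; the only point requiring care is the observation that $\aut=\mathrm{id}$ does not immediately pass to the factors in the abstract setting, which is precisely why Theorem \ref{t.11}(2) (excluding repeated isomorphic factors) must be invoked before Theorem \ref{t.11}(1) can be applied to identify $\aut$ with the product of the $\mathrm{Aut}\,A_i$.
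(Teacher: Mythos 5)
Your proof is correct and follows essentially the same route as the paper: decompose $R$ into local factors via \cite[Theorem 8.7]{AM}, note $\aut=\mathrm{id}$ forces each factor to have trivial automorphism group, and apply Theorem \ref{t.15} to each factor. The only difference is that you explicitly justify the transfer of triviality to the factors via Theorem \ref{t.11}, a step the paper's proof asserts without citation; your added care there is a genuine (if minor) improvement in rigor.
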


                 \begin{proof}
                    Since $R$ is a finite ring, by
                            \cite[Theorem 8.7]{AM},  $R$ is a finite
                             product of local rings. Further, as
                            $\Gamma_{\aut}(R)$
                             is totally disconnected, $\aut =
                             \mathrm{id}$. Hence each of the local ring in
                             the decomposition of $R$ has
                             automorphism group trivial. Therefore
                             the result follows from Theorem \ref{t.15}.
                 \end{proof}
                 \begin{rmk}
                   Let $(R, M)$ be finite local ring with
                   characteristic of $R/M=p$. If
                   $[R/M:\F_p]>2$, then $R$ is of type at
                   least $2$. This can be deduced from the
                   proof of  Theorem \ref{t.15}.
                 \end{rmk}

                             \section{ Some connected subsets of
                             $\Gamma_{\aut}(R)$}

                               In this section, we study the
                               structure of a finite local ring $R$
                               for which certain subsets of  $\Gamma_{\aut}(R)$
                               are connected.
                   \begin{thm}\label{t.17}
Let $(R,M)$ be a finite local
                               ring and $U(R)$ be the set of units of
                               $R$. If $U(R)-\{1\}$ is a connected
                               subset of $\Gamma_{\aut}(R)$, then $R$ is
                               one of the following.
                   \begin{enumerate}
                 \item $\Z_2,\Z_3,\Z_4$ or $\F_4$.
\item  $\Z_2[X_1,\cdots,X_m]/I$ where $I$ is
                               the ideal of $ \Z_2[X_1,\cdots,X_m]$
                               generated by $\{X_iX_j| 1\leq i, j
                               \leq m\}$.
                   \end{enumerate}

                   \end{thm}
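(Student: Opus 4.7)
The plan is to exploit the elementary fact that every $\sigma\in\aut$ fixes $1$, sends $U(R)$ to $U(R)$, and sends $M$ to $M$, so it preserves $1+M$ and $U(R)-\{1\}$. Since the cliques of $\Gamma_{\aut}(R)$ are exactly the $\aut$-orbits, connectedness of $U(R)-\{1\}$ in $\Gamma_{\aut}(R)$ is equivalent to $\aut$ acting transitively on $U(R)-\{1\}$. The proof then splits according to $|R/M|$, $|M|$, and $\mathrm{char}(R)$.

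First I would dispose of $|U(R)-\{1\}|\le 1$. Using $|U(R)|=|M|\,(|R/M|-1)$ and $|R|=|M|\,|R/M|$, a direct case analysis on $|U(R)|\in\{1,2\}$ forces $R/M\in\{\F_2,\F_3\}$ and $|R|\le 4$, giving $R\in\{\Z_2,\Z_3,\Z_4,\F_2[X]/(X^2)\}$. The first three lie in case (1) and the last is the $m=1$ instance of case (2).

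Next, assume $|U(R)-\{1\}|\ge 2$, so that $\aut$ is transitive on $U(R)-\{1\}$. If $M\ne 0$ and $|R/M|\ge 3$, I would observe that $\aut$ preserves $1+M$, so the partition $U(R)-\{1\}=\bigl((1+M)-\{1\}\bigr)\sqcup\bigl(U(R)-(1+M)\bigr)$ is $\aut$-stable with both pieces non-empty (a lift of any unit of $R/M$ other than $\obar 1$ witnesses the second), contradicting transitivity. If instead $M=0$, so $R=\F_q$ with $q=p^n$, then $\aut$ is cyclic of order $n$, transitivity forces the orbit size $q-2$ to divide $n$, and the inequality $p^n\le n+2$ permits only $q\in\{2,3,4\}$; among these $\F_3$ and $\F_4$ survive.

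The remaining and most delicate case is $R/M=\F_2$, $M\ne 0$, $|M|\ge 3$. Here $U(R)-\{1\}=1+(M-\{0\})$, so $\aut$ must act transitively on $M-\{0\}$. If $\mathrm{char}(R)=2^k$ with $k\ge 2$, then $2\in M-\{0\}$ is fixed by every automorphism, forcing the orbit of $2$ to be a singleton and contradicting $|M-\{0\}|\ge 2$. Hence $\mathrm{char}(R)=2$ and $\F_2\subset R$. If $M^2\ne 0$, then $M\ne M^2$ by Nakayama and the decomposition $M-\{0\}=(M-M^2)\sqcup(M^2-\{0\})$ is an $\aut$-invariant partition with both parts non-empty, again contradicting transitivity. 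Thus $M^2=0$, so $R=\F_2\oplus M$ as abelian groups with multiplication determined by $\F_2$-linearity and $M^2=0$; any $\F_2$-basis $x_1,\ldots,x_m$ of $M$ then produces $R\cong\F_2[X_1,\ldots,X_m]/(X_iX_j)_{i,j}$, landing in case (2). The main obstacle I expect is precisely this last case: ruling out $M^2\ne 0$ via the invariant partition argument while simultaneously eliminating mixed characteristic $2^k$ via the canonical fixed element $2$; the earlier cases reduce to counting and orbit--stabilizer.
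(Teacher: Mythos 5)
Your proof is correct and follows essentially the same strategy as the paper's: both reduce connectedness of $U(R)-\{1\}$ to its being a single $\aut$-orbit and then obstruct transitivity using canonically invariant subsets (the fixed units of $\Z_{p^n}$, respectively the fixed element $\obar 2$; the coset $1+M$ to force $R/M\cong\F_2$; and the ideal $M^2$ to force $M^2=0$), together with the same orbit count $q-2\leq n$ in the field case. Your packaging of these steps as invariant partitions is a slightly tidier presentation of the paper's element-by-element arguments, but the underlying ideas coincide.
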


                   \begin{proof}
                 If $U(R) = \{1\}$, then $M=0$ since
                               $1+x$ is a unit for all $x\in M$.
                               Therefore, in this case,  $R=\Z_2$.

Now assume $U(R)-\{1\} \neq \emptyset$. Let $ p^n $
                               be the characteristic of $ R $ so that
                               $\Z_{p^n}\subset R$. The number of
                               units in $\Z_{p^n}$ is $p^{n-1}(p-1)$.
                               For any $\sigma\in \aut$,
                               $\sigma$ is identity on
                               $\Z_{p^n}$. Thus all elements of
                               $U(\Z_{p^n})\subset U(R)$ have orbits
                               consisting of just one element. If
                               $U(R)-\{1\}$ is connected, it follows
                               that the cardinality of $U(\Z_{p^n})$
                               can not be greater than two. Thus
                               $p^{n-1}(p-1)\leq 2$.
                               We deduce that
                               either $p=2,n=1,2 $ or $ p=3,
                               n=1$.

If $M\supsetneqq
                               \obar{p}\Z_{p^n}$, then for any $ x\in
                               M$, with  $x\not\in \obar{p}\Z_{p^n}$, $1+x $ is
                               a unit not in  $\Z_{p^n}$.
                               Therefore, in the cases $ p=2,n=2 $ or
                               $ p=3,n=1$, one sees that $U(R)-\{1 \}$
                               is not connected.
                               Consequently $ M = \obar{p}\Z_{p^n} $,
                               if $ p=2,n=2 $ or $ p=3, n=1 $.

Let us first look at the cases, $p=2,n=2$ and $p=3,n=1$.
In these cases, $ M =
                                     \obar{p}\Z_{p^n} $ from above.
                     The set $U(\Z_{p^n})-\{1\}$ has exactly
                     one element and it is invariant
                     under all automorphisms of
                     $R$. Thus, this single element
                     set is a connected component of
                     $U(R)-\{1\}$, and since this set
                     is assumed to be connected, we
                     see that
                     $U(R)-\{1\}=U(\Z_{p^n})-\{1\}$.
                     This implies
                    $ \Z_{p^n}-\obar{p}\Z_{p^n} = R-M
                     $,
                                      Thus $R = \Z_{p^n}$ proving the
                      theorem in these cases.

We are left with the last case, when  $ p=2$ and $ n=1$.
                                    In this case $\Z_2\subset R$.
If $R$ is a field, then $R=\F_q$ where $q=2^s$. The automorphism group
                                    of $\F_q$ has order $s$ and thus
                                    the orbits have cardinality at
                                    most $s$. Since the cardinality of
                                    $U(\F_q)$ is $q-1$, we get that
                                    $2^s-2=q-2\leq s$. One easily sees that
                                    this implies $s\leq 2$. Since we
                                    are assuming that $U(R)-\{1\}\neq
                                    \emptyset$, this forces $s=2$ and
                                    $R=\F_4$. One easily checks that
                                    in this case, $U(R)-\{1\}$ is
                                    connected.

Finally we may assume that $M\neq 0$.   Let $0\neq x\in M$. If $u\neq 1$ is
any unit, then the connectedness of $U(R)-\{1\}$ implies that there
exists a $\sigma\in\aut$ such that $\sigma(1+x)=u$ and hence $u\equiv
1\bmod M$. This implies $R/M\cong\Z_2$. Next we show
                                    that $M^2=0$. For
                                    this it suffices
                                    to show that for any $x\in M-M^2$
                                    and any $y\in M$, $xy=0$. If
                                    $xy\neq 0$, then there exists an
                                    automorphism $\tau$ of $R$ so that
                                    $\tau(1+x)=1+xy$ which
                                    implies that
                                    $\tau(x)=xy$. But, then
                                    $\tau(x)\in M-M^2$ and $xy\in
                                    M^2$, which is a
                                    contradiction. So $M^2=0$.

 Now, let
                                    $\{a_1,\cdots,a_m\}$ be a
                                    minimal set of generators for
                                    $M$. Then consider the
                                    surjective homomorphism
 $$ f:
                                    \Z_2[X_1,\cdots,X_m]\rightarrow
                                    R,\quad  X_i\longmapsto a_i$$
                                    As $M^2 =0$, $\abs{M} = 2^m$,
                                    since $m= \dim_{R/M} M/M^2$ and
                                    $R/M= \Z_2$. Therefore $\abs{R}=
                                    2\abs{M} = 2^{m+1}$  and
                                    $\mathrm{Ker}\, f =
                                    I$ is the ideal generated by
                                    $X_iX_j$ with $1\leq i,j\leq m$.  As
                                    $\abs{\Z_2[X_1,\cdots,X_m]/I}=
                                    2^{m+1}$, it follows that
                                    $\Z_2[X_1,\cdots,X_m]/I$ is
                                    isomorphic to $R$. It is easy to
                                    see that in this case,
                                    $U(R)-\{1\}$ is indeed connected.
                   \end{proof}

                   \begin{thm}\label{t.18}
Let
                                    $(R,M)$ be a finite local
                                    ring with characteristic $p^n$.
                                    If $M-\{0\}$ is connected, then
                                    $R=\Z_4$  or
                                    $\F_q[X_1,\cdots, X_m]/ I$ where
                                    $\F_q$ is a finite field with
                                    $q$ elements and $ I$ is the ideal
                                    generated by elements of the form
                                    $X_iX_j$ with $ 1\leq i,j \leq
                                    m$. By convention, we will include
                                    the case $R=\F_q$, when $m=0$.
                   \end{thm}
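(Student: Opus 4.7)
The plan is to exploit the observation recorded in the preliminaries that a connected subset of $\Gamma_{\aut}(R)$ is a single $\aut$-orbit, so the hypothesis reads: $M - \{0\}$ is one $\aut$-orbit. Since every $\sigma \in \aut$ fixes the prime subring $\Z_{p^n}$ pointwise (as $\sigma$ must send $1$ to $1$), I split on whether $n \geq 2$ or $n = 1$.

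For $n \geq 2$, the element $\obar{p} \in M$ is nonzero and $\aut$-fixed, so $\{\obar{p}\}$ is a singleton orbit. This forces $M - \{0\} = \{\obar{p}\}$, hence $\abs{M} = 2$. From $2\obar{p} = 0$ one gets $p = 2$, and the distinct nonzero elements $\obar{p}, \obar{p}^2, \dots, \obar{p}^{n-1}$ of $M$ force $n = 2$. Then $M^2 = 0$ makes $M$ a vector space over $R/M$, and $\abs{M} = 2$ forces $R/M = \F_2$, giving $\abs{R} = 4$, and since $\Z_4 \subset R$, $R = \Z_4$.

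For $n = 1$, with $\mathrm{char}\, R = p$: if $M = 0$ then $R = \F_q$, which is the $m = 0$ case. Assume $M \neq 0$. I first claim $M^2 = 0$: both $M^2$ and $M - M^2$ are $\aut$-invariant, so if $M^2 - \{0\}$ and $M - M^2$ were both nonempty, $M - \{0\}$ would split into two invariant pieces, contradicting that it is a single orbit; nilpotence rules out $M^2 = M$. Next, I rerun the Hensel-lifting construction from the proof of Theorem \ref{t.15} with $\alpha = 1$ to obtain a subring $B \subset R$ with $B \cong \F_p[T]/(f_1(T)) \cong R/M = \F_q$, i.e., a coefficient field $\F_q \subset R$. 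Picking an $\F_q$-basis $a_1, \dots, a_m$ of $M$ (using $M^2 = 0$), the $\F_q$-algebra surjection $\F_q[X_1, \dots, X_m] \to R$ sending $X_i \mapsto a_i$ has kernel containing $I = (X_iX_j)_{1 \leq i,j \leq m}$, and a cardinality count ($q^{m+1}$ on each side) gives $R \cong \F_q[X_1, \dots, X_m]/I$.

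For the converse, any $\alpha \in \mathrm{GL}_m(\F_q)$ induces an $\F_q$-algebra automorphism of $\F_q[X_1, \dots, X_m]/I$ (since the relations $X_iX_j = 0$ are preserved by a linear change of variables), and $\mathrm{GL}_m(\F_q)$ acts transitively on $M - \{0\}$; for $\Z_4$, the set $M - \{0\}$ is a single vertex and is trivially connected. The main obstacle is the case $n = 1$, $M \neq 0$: one must extract the coefficient field $\F_q \subset R$, for which it is cleanest to invoke the Hensel-lifting argument already carried out in Theorem \ref{t.15}, and then pin down the ideal of relations to be exactly $I$, which is where the cardinality count after $M^2 = 0$ does the work.
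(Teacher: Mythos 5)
Your proof is correct and follows essentially the same route as the paper: reduce to $n\le 2$ using that $\obar{p}$ is fixed by every automorphism, conclude $R=\Z_4$ when $n=2$, and for $n=1$ produce a coefficient field $\F_q\subset R$ by the Hensel-lifting argument of Theorem \ref{t.15} and identify $R$ with $\F_q[X_1,\dots,X_m]/I$ by a cardinality count after showing $M^2=0$. The only (harmless) deviations are that you get $R=\Z_4$ from $\abs{R}=4$ rather than the paper's explicit argument with a unit $\lambda\notin\Z_4$, and you derive $M^2=0$ from the $\aut$-invariance of $M^2$ and $M-M^2$ rather than the paper's explicit contradiction via $\tau(x)=xy$.
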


                               \begin{proof}
If $M-\{0\}=\emptyset$, then $R$ is a field and hence $\F_q$ for some
$q$. So, let us assume that
$M\neq 0$.

                 As characteristic of $R$ is
                                    $p^n$, $\Z_{p^n} \subset R$.
                                    Exactly as  in Theorem \ref{t.17},
                                    we can see that
                                    $M^2 = 0$. Now, note that
                                    $M\cap \Z_{p^n} = (\obar{p})$.
                                    Hence $n\leq 2$.

                                   First we  consider the case  $n=2$.
                                         In this case, if $p> 2$,
                                         then for any $1< u < p$,
                                         the two elements $u\obar{p},
                                         \obar{p}$ are
                                         distinct non-zero  elements of $M$
                                         and for any $\sigma \in
                                         \aut$, $\sigma(\obar{p}) =
                                         \obar{p}$ and $ \sigma( u\obar{p})=
                                         u\obar{p}$. This contradicts
                                         the fact $M-\{0\}$ is
                                         connected. Hence $p=2$. In
                                         this case $M= \{\obar{2},
                                         0\}$ since $\sigma(\obar{2}) =
                                         \obar{2}$ for any
                                         automorphism $\sigma$ of $R$
                                         and $M-\{0\}$ is
                                         connected.  If $R \neq \Z_4$,
                                         then choose $
                                         \lambda\in R-\Z_4$. Clearly
                                         $\lambda\not\in M$ and hence
                                         is a unit. Now, note that
                                         $\obar{2}$ and $ \lambda
                                         \obar{2}$ are in in $M=
                                         \{\obar{2},0\}$.
                                         Therefore
$\lambda\obar{2}=\obar{2}$ and hence $(\lambda-1)\obar{2}=0$. Since
                                         $\obar{2}\neq 0$, this
                                         implies that $\lambda-1\in M$
                                         and and since $M\subset
                                         \Z_4$, we see that
                                         $\lambda\in \Z_4$,
                                         contradicting our choice of
                                         $\lambda$. Thus, in this case
                                         $R=\Z_4$.

In the last case of $n=1$, we have $\Z_p\subset R$. So
                                                $\Z_{p}\subset R/M$ is
                                                a finite separable
                                                extension and so as in
                                                Theorem \ref{t.15},
                                                there exists a
                                                finite field $\F_q
                                                \subset R$ such that
                                                $\F_q$ is isomorphic
                                                to $R/M$. Now, let
                                                $\{a_1,\cdots,a_m \}$
                                                be a minimal set of
                                                generators for $M$.
                                                Then consider as
                                                before the
                                               surjective  map
$$f:
                                                \F_q[X_1,\cdots,X_m]\rightarrow
                                                R,\quad   X_i\longmapsto
                                                a_i$$

                                               Again  $\mathrm{Ker}\, f$ is the
                                                ideal $I$ generated by
                                                elements of the form $
                                                X_iX_j$ with $ 1\leq i,
                                                j\leq m$.  Note
                                                that, as seen above,
                                                $m = \dim_{R/M}M$.
                                                Thus $\abs{R} =
                                                q^{m+1}$ and similarly
                                                $\abs{\F_q[X_1,\cdots,X_m]/I}
                                                = q^{m+1}$.
                                                Consequently $f$ is
                                                an isomorphism.
                                                Hence the proof is
                                                complete.
                   \end{proof}

                               \begin{thm}\label{t.19}
Let $K/E$ be a
                                                field extension, and
                                                let $\mathrm{Aut}_E K = H$.
                                                Assume $K-E\subset
                                                \Gamma_H(K)$ is
                                                connected. Then
                                                either $K/E$ is
                                                algebraic or all
                                                elements of $K-E$
                                                are transcendental
                                                over $E$. Moreover,
                                                $K^H = E$. Further,
                                                if $K/E$ is
                                                algebraic and not
                                                equal, then $E=\F_2$
                                                and $K=\F_4$.

                   \end{thm}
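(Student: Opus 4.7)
The plan is to first translate the graph-theoretic hypothesis into a statement about $H$-orbits, then handle the three assertions in turn. For the setup, note that since each $\sigma\in H$ fixes $E$ pointwise, $H$ preserves $K-E$ set-wise. Any edge of $\Gamma_H(K)$ joins two elements of one $H$-orbit, so any connected subset of $\Gamma_H(K)$ lies in a single orbit. When $K\neq E$, for $x\in K-E$ we have $x+E\subseteq K-E$ (otherwise $x\in E$), so $|K-E|\geq|E|\geq 2$. Thus the hypothesis forces $K-E$ to be a single $H$-orbit of size at least two.

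For the first assertion, suppose $K/E$ is not algebraic and choose a transcendental $t\in K-E$. If $a\in K-E$ were algebraic over $E$, it would share the orbit of $t$, so $t=\sigma(a)$ for some $\sigma\in H$; but $\sigma$ fixes the minimal polynomial of $a$ coefficientwise, making $t$ algebraic, a contradiction. Hence every element of $K-E$ is transcendental over $E$. For the second assertion, the inclusion $E\subseteq K^H$ is clear, and any $x\in K^H\cap(K-E)$ would have orbit $\{x\}$, making $x$ an isolated vertex of the induced subgraph on $K-E$; since that subgraph has at least two vertices and is connected, this is impossible.

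For the final assertion, assume $K/E$ is algebraic and $K\neq E$. Since $K^H=E$, Artin's theorem gives that $K/E$ is Galois with $H=\mathrm{Gal}(K/E)$. For any $x\in K-E$ the orbit $O(x)$ equals the set of Galois conjugates of $x$, hence lies in $E(x)$ and has cardinality $[E(x):E]<\infty$. Thus $K-E=O(x)$ is finite, and since $x+E\subseteq K-E$ this forces $E$ (hence $K$) to be finite. Writing $E=\F_{p^d}$ and $K=\F_{p^m}$ with $d\mid m$, the inclusion $O(x)\subseteq E(x)$ combined with $K=E\cup O(x)$ yields $E(x)=K$, so $|O(x)|=[K:E]=m/d$. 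This gives
$$p^m-p^d=m/d.$$
The main obstacle --- routine but genuinely the crux --- is the arithmetic step: setting $r=m/d\geq 2$ and factoring as $p^d(p^{(r-1)d}-1)=r$, a direct size comparison (the left side grows exponentially in $r$ while the right side grows linearly) rules out every case except $p=2$, $d=1$, $r=2$, giving $E=\F_2$ and $K=\F_4$.
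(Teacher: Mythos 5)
Your proof is correct and follows essentially the same route as the paper: connectedness makes $K-E$ a single $H$-orbit, orbits of algebraic elements are finite (bounded by the degree of the minimal polynomial), translating by $E$ shows $|K-E|\geq |E|$ so that $E$ and hence $K$ must be finite, and the resulting count $q^t-q=t$ (the paper uses the inequality $q^t-q\leq t$, obtained from $|O(x)|\leq |H|=t$) pins down $E=\F_2$, $K=\F_4$. One small imprecision: the clause asserting that the Galois conjugates of $x$ ``lie in $E(x)$'' is false for a general Galois extension $K/E$ (take $E=\Q$, $K$ the splitting field of $T^3-2$, $x=\sqrt[3]{2}$), and only the finiteness $|O(x)|\leq [E(x):E]$ is legitimate at that stage; fortunately you invoke the containment only after $K$ is known to be a finite field, where every subextension is normal and the claim does hold.
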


                               \begin{proof}
                 Let $a,b\in
                                                K-E $ be two
                                                distinct elements
                                                such that $a$ is
                                                algebraic over $E$.
                                                Since $K-E\subset
                                                \Gamma_H(K)$ is
                                                connected, there
                                                exists $\sigma \in
                                                H$ such that
                                                $\sigma(a) = b$.
                                                Therefore $b$ is
                                                also algebraic over
                                                $E$. This proves the
                                                first part of the
                                                statement.

 Next,
                                                note that $E\subset
                                                K^H$. Then, as $K-E\subset
                                                \Gamma_H(K)$ is
                                                connected, it is
                                                clear that $K^H - E
                                                = \emptyset $, or in
                                                other words  $K^H
                                                = E$.

Now, let $K/E$
                         be algebraic. We  shall consider the  cases
                         of $K$ being infinite or finite separately.

First consider the case when $K$ is infinite. If $K-E\neq \emptyset$,
                                                 let $\lambda\in
                                                 K-E$. Let
                                                 $p(T)$ be the
                                                 irreducible
                                                 polynomial of
                                                 $\lambda$ over
                                                 $E$. Then for any
                                                 $\sigma\in H$,
                                                 $\sigma(\lambda)$
                                                 must be a root of
                                                 $p(T)$ and in
                                                 particular the orbit
                                                 of $\lambda$ is
                                                 finite. Since $K-E$
                                                 is connected, this
                                                 means that $K-E$ is
                                                 the orbit of
                                                 $\lambda$ and thus
                                                 $K-E$ is a finite
                                                 set. Thus, $K$ is a
                                                 finite dimensional
                                                 vector space over $E$
                                                 and so $E$ must be
                                                 infinite too. For any
                                                 $0\neq a\in E$,
                                                 $a\lambda\in K-E$ and
                                                 these are
                                                 distinct. So, $K-E$
                                                 is infinite, which is
                                                 a contradiction. So,
                                                 $K$ can not be infinite.

Next, let us consider the case when   $K$ is finite. Let $E=\F_q$ and
let $|K:\F_q|=t>1$. Then  $H$ is a cyclic group of order $t$ generated
by an appropriate power of the Frobenius. For any $\lambda\in K-E$,
the cardinality of the orbit of $\lambda$  is therefore at most
$t$. Since $K-E$ is connected, we have $|K-E|\leq t$. On the other
hand, $|K-E|=q^t-q$ and thus we get $q^t-q\leq t$. It is easy to check
that this can happen only when $q=2$ and $t=2$. This proves the
theorem.

If $E=\F_2$ and $K=\F_4$, then it is trivial to check that $\F_4-\F_2$
is indeed connected.

                   \end{proof}

                  Let $K/k$ be a field extension where
                 $K$ and $k$ are algebraically closed. Let $ H =
                 \mathrm{Aut}_k(K)$. Then, it is easy to check that
                 $K-k\subset \Gamma_H(K)$ is connected. We, now,
                 ask  the converse:
         \begin{que}
            Let $k$ be an algebraically closed
                 field and let $K/k$ be a field extension with $H =
                 \mathrm{Aut}_k(K)$. Assume  $K-k\subset \Gamma_H(K)$ is
                 connected. Is $K$ algebraically closed?
         \end{que}

                 This question is a slight variant of part of
                 Conjecture 2.1 in \cite{KP}.
                 
                After the work was done, Roger Wiegand pointed out that Theorem 3.1 and 
                Corollary 3.2 have also been proved in \cite{KS}. However, the proof is 
                different.

%\bibliographystyle{amsplain}
%\bibliography{sharma}

\end{document}